\def\diag{\mathrm{diag}}
\newcommand{\tol}{{\tt Tol }}
\newcommand{\R}{\mathds{R}}
\renewcommand{\Re}{\R}
\newcommand{\col}[1]{\left\{#1\right\}}
\theoremstyle{thmstyleone}%
\newtheorem{theorem}{Theorem}
\newtheorem{proposition}[theorem]{Proposition}%
\theoremstyle{thmstyletwo}%
\theoremstyle{thmstylethree}%
\newtheorem{definition}{Definition}%
\begin{document}

\title[Scenario Tree Reduction via Wasserstein Barycenters]{Scenario Tree Reduction via Wasserstein Barycenters}

\author*[1,2]{\fnm{Daniel} \sur{Mimouni}}\email{daniel.mimouni@ifpen.fr}

\author[2]{\fnm{Paul} \sur{Malisani}}\email{paul.malisani@ifpen.fr}

\author[3]{\fnm{Jiamin} \sur{Zhu}}\email{jiamin.zhu@ifpen.fr}

\author[1]{\fnm{Welington} \sur{de Oliveira}}\email{welington.oliveira@minesparis.psl.eu}

\affil[1]{\orgdiv{Centre de Mathématiques Appliquées (CMA)}, \orgname{Mines Paris PSL}, \orgaddress{ \city{Sophia Antipolis}, \postcode{ 06560}, \country{France}}}

\affil[2]{\orgdiv{Department of Applied Mathematics}, \orgname{IFP Energies nouvelles}, \orgaddress{\city{Rueil-Malmaison}, \postcode{92063}, \country{France}}}

\affil[3]{\orgdiv{Department of Control Signals and Systems}, \orgname{IFP Energies nouvelles},\orgaddress{ \city{Rueil-Malmaison}, \postcode{92063}, \country{France}}}


\abstract{Scenario tree reduction techniques are essential for achieving a balance between an accurate representation of uncertainties and computational complexity when solving multistage stochastic programming problems. In the realm of available techniques, the Kovacevic and Pichler algorithm (Ann. Oper. Res., 2015 \cite{Kovacevic}) stands out for employing the nested distance, a metric for comparing multistage scenario trees. However, dealing with large-scale scenario trees can lead to a prohibitive computational burden due to the algorithm's requirement of solving several large-scale linear problems per iteration. This study concentrates on efficient approaches to solving such linear problems, recognizing that their solutions are Wasserstein barycenters of the tree nodes' probabilities on a given stage. We leverage advanced optimal transport techniques to compute Wasserstein barycenters and significantly improve the computational performance of the Kovacevic and Pichler algorithm. Our boosted variants of this algorithm are benchmarked on several multistage scenario trees. Our experiments show that compared to the original scenario tree reduction algorithm, our variants can be eight times faster for reducing scenario trees with 8 stages, 78\,125 scenarios, and 97\,656 nodes. 
}

\keywords{Nested Wasserstein Distance , Scenario Tree Reduction, Wasserstein Barycenter, Optimal Transport, Stochastic Optimization, Scenario Selection.}


\pacs[MSC Classification]{46N10, 90C08, 49Q22, 90C15.}

\maketitle

\section{Introduction} \label{introduction}

Stochastic optimization techniques have proved essential in solving optimization problems in the presence of uncertainties driven by variables such as fluctuating prices, unpredictable demand, supply variations, resource availability, and scheduling intricacies. The notion of stochastic programming is pioneered by Dantzig \cite{dantzig1955linear}, and applications can be found in various sectors, including the financial industry \cite{edirisinghe2005multiperiod, brodt1983min}, supply chains \cite{paulo2017designing}, management science \cite{carpentier2012dynamic}, energy economics \cite{analui2014distributionally,beltran2017application}, electrical markets \cite{dupavcova2003scenario}, hydro-thermal power systems \cite{de2010optimal} and maintenance of units \cite{qian2017scenario}.

Multistage stochastic programming, a class of stochastic optimization, often relies on scenario trees to represent the underlying stochastic process. In the quest for an accurate representation of uncertainties, large scenario trees are required.  However, as the number of scenarios increases, so does the complexity of the problem and the computational effort regardless of the optimization method, whether it is based on scenario decomposition or stage decomposition \cite{Beltran_2021}. Hence, finding a balance between an accurate representation of uncertainties and numerical tractability is of paramount importance in real-life applications modeled as multistage stochastic optimization problems.


In 2003, the influential work \cite{dupavcova2003scenario} introduced scenario reduction techniques based on the minimization of the Wasserstein distance between two scenario trees, pioneering the forward reduction and backward selection methodologies. 
Based on these ideas,  \cite{de2010optimal} proposes to compute the Wasserstein distance at different nodes of a multistage scenario tree to design a scenario tree reduction algorithm. Differently, \cite{li2014optimal} formulates the  scenario reduction problem as a mixed-integer linear programming  problem. To decrease the computational effort when dealing with the Wasserstein distance, which can be formulated as a linear programming (LP) problem for finite sample of scenarios, subsequent works \cite{li2016linear, kammammettu2023scenario} employ entropy-regularization schemes leveraged by the Sinkhorn–Knopp algorithm \cite{sinkhorn1967concerning}. 
However, being based on the Wasserstein distance, these techniques ignore the filtration (structure) of multistage scenario trees. 
Neglecting the tree filtration potentially leads to deviations in solutions, because the solution of a multistage stochastic optimization problem is non-anticipative. As an attempt to cope with this shortcoming, the work \cite{heitsch2006stability} takes into consideration the so-called filtration distance, which relies on the tree structure. The stability results in that paper are the basis for the scenario tree reduction approach proposed in \cite{heitsch2009scenario}, which is a Wasserstein-based method that measures distances between nodes with the same parent, ultimately reducing pairs of nodes until a stopping criterion is met. 
However, this method encounters computational challenges, as it relies on solving NP-hard facility location problems. The work \cite{chen2018scenario} proposes a scenario tree reduction algorithm that circumvents this difficulty by clustering tree nodes based on a new filtration distance and computes an approximation of the reduction problem. Scenario reduction strategies based on clustering are numerous in the literature, but often lack stability properties; see, for instance,  \cite{xu2012scenario,horejvsova2020evaluation} and references therein.

A milestone in the field of scenario tree reduction was the introduction of the \emph{nested distance} in \cite{pflug2010version}, subsequently studied in \cite{pflug2012distance}.
The nested distance, also called \emph{process distance}, offers a valuable framework for comparing multistage scenario trees as it considers the underlying filtrations. Leveraging the nested distance to guide scenario tree reduction enables control over the reduced tree's statistical quality and its impact on the objective value of the underlying multistage stochastic optimization problem. While it is relatively easy to calculate the nested distance between two given trees \cite{pichler2022nested}, finding a tree (with given filtration) that minimizes the nested distance is a much more challenging task. The reason is that the approximating tree's probabilities and support (i.e., the scenarios or outcomes) must be chosen to minimize the nested distance. This leads to a challenging optimization problem, which is large-scale and nonconvex. To tackle such a problem, Kovacevic and Pichler \cite{Kovacevic} introduced an algorithm that directly targets the minimization of the nested distance by alternating between probability and support optimizations. While the last task has an explicit solution (should the Euclidean norm be employed as a metric in the nested distance), optimizing the probabilities amounts to solving several large-scale LPs per stage. This represents the main bottleneck of the scenario tree reduction algorithm of \cite{pichler2022nested}, which can be partially avoided in some special cases. For instance, the work \cite{beltran2017application} provides a variant of the algorithm capable of efficiently handling large-scale multistage scenario trees provided the stochastic process is \emph{stage-wise independent}, a strong assumption we do not assume in this work. Hence, for general stochastic processes, there is a clear need for more practical scenario tree reduction approaches based on the nested distance. This work contributes in this direction by boosting the Kovacevic and Pichler (KP) algorithm \cite{pichler2022nested}. 

One of our contributions stems from recognizing that the algorithm's most time-consuming task, the probability optimization step, amounts to computing Wasserstein barycenters of the tree nodes' probabilities. In addition, we leverage advanced optimal transport techniques to compute Wasserstein barycenters within the KP's algorithm by employing efficient and dedicated approaches such as the Iterative Bregmann Projection method (IBP) \cite{IBP} and the   Method of Averaged Marginals (MAM) \cite{MAM}, to attractively boost the computational performance for reducing scenario trees. As a third contribution, we benchmark our variants of the KP algorithm on real-life data and empirically show that they significantly outperform the baseline KP algorithm for large-scale multistage scenario trees. Our findings alleviate the algorithm's bottleneck, helping thus to elevate the KP approach among the top algorithmic choices for scenario tree reduction. We also emphasize the importance of the filtration initialization in the finding of a reduced tree.

The remainder of this work is organized as follows. \Cref{Kovacevic and Picler's approach} presents the Kovacevic and Pichler's approach and recalls the mathematical background for the nested distance. The barycentric approaches are introduced in \Cref{Our new approach}, and the improved variant of the scenario tree reduction algorithm is developed. Some applications in \Cref{Applications} compare the new approach with the original algorithm by Kovacevic and Pichler, concluding with a speed-up method designed for real-life problems.


\section{The Kovacevic and Pichler's approach} \label{Kovacevic and Picler's approach}
In \cite{pflug2010version}, Pflug introduced the Nested Distance (ND), which is built on the Wasserstein distance, exploiting its structure and extending it to accommodate the specific characteristics of stochastic processes (see also \cite{pflug2012distance}). It is a tailor-made measure for stochastic processes: it inherits the foundational properties of the Wasserstein distance, such as its sensitivity to local structure and robustness to outliers, while providing a more nuanced and specialized measure for comparing the similarities between different realizations of stochastic processes.

\subsection{Scenario trees and notations}\label{tree_notations}
A T-period scenario tree ($\mathcal{N}$, A) is a discrete form of a random process - a family of random variables 
(see \cite{Kovacevic} for more). 
It is a composed of (a set of) nodes $\mathcal{N}$ and edges A.
\if{
For the scenario tree, there is only one root node associated with the value $\xi_0(\omega)$, for all $\omega\in \Omega$.
It is accepted to call the vertices $\mathcal{N}$, nodes \cite{Pflug_Pichler_2014}, A is the set of edges.  
    
Thus, we denote the value taken by the process $\xi \in \Xi$ at node $n$ by $\xi_t(\omega)=\xi(n)$. 
}\fi

A node $m\in \mathcal{N}$ is a direct predecessor or parent of the node $n\in\mathcal{N}$, if $(m, n)\in A$, where $(m, n)$ denotes the edge between the nodes $m$ and $n$.
This relation is embodied by the notation $m = n -$. Reciprocally, $n$ is a direct successor (or child) of $m$ and this set is denoted $m+$, such that $n\in m+$ if and only if $m = n -$. 
In the same vein, we denote the predecessors (or ancestors) of $n \in \mathcal{N}$ as $\mathcal{A}(n)$, the set of nodes for which there exists a path to $n$: for example $m_1 = m_2 -$ and $m_2 = n -$ (equivalently $n \in m_2+$ and $m_2 \in m_1+$), then $m_1, m_2 \in \mathcal{A}(n)$.
We consider only trees with a single root, denoted by 0, i.e., $0 - = \emptyset$, but the methods developed here can be generalized for multi-root trees (forests). 
Nodes $n_T \in \mathcal{N}$ without successor nodes (i.e., $n_T+ = \emptyset$) are called leaf nodes. For every leaf node $n_T$ there is a sequence $\xi_{n_T}:=(n_0, \ldots, n_t , \ldots, n_T)$ where $n_0 \in \mathcal{A}(n_1)$, $n_1 \in \mathcal{A}(n_2)$ etc, from the root to the leaf node $n_T$ composed by $T+1$ nodes.
$\mathcal{F}_t$ is the filtration generated by the sigma-algebra of the family $\left(\xi_n\right)_{n\in\mathcal{N}_t}$ and we define $\mathcal{F}:=\left(\mathcal{F}_t\right)_{t=1,\dots, T}$.
The nodes bear a value named \emph{quantizer}: $\xi:\mathcal{N}\mapsto \Xi$, we call $\xi(n)$ the quantizer of node $n$, where $\Xi$ is a finite dimension metric space.
We denote the probability, assigned to node $n$, by $P(n)$, that satisfies: $P(n)=\sum_{\tilde n \in n+}P(\tilde n)$ for $n\in\mathcal{N}$ and $\sum_{n\in \mathcal{N}_T}P(n) = 1$. We denote conditional probability between successors by $P(n|m) = P(n)/P(m)$ for $m = n -$. Furthermore, using a distance $\mathtt{d}:\Xi^{t+1}\times\Xi^{t+1}\rightarrow\R$, we denote the distance between two nodes $n_1,n_2$ at stage $t$, respectfully, as:
\begin{equation} \label{1}
\mathtt{d}_{n_1,n_2} :=  \displaystyle \sum_{n\in \xi_{n_1}, \bar{n}\in \xi_{n_2}, n \text{ and } \bar{n} \text{ at stage }t}\tilde{\mathtt{d}}(\xi(n),\xi(\bar{n})) 
\end{equation} 
Where $ \tilde{\mathtt{d}}$ is a distance on $\Xi$.

\begin{figure}
    \centering
    \includegraphics[width=0.7\textwidth]{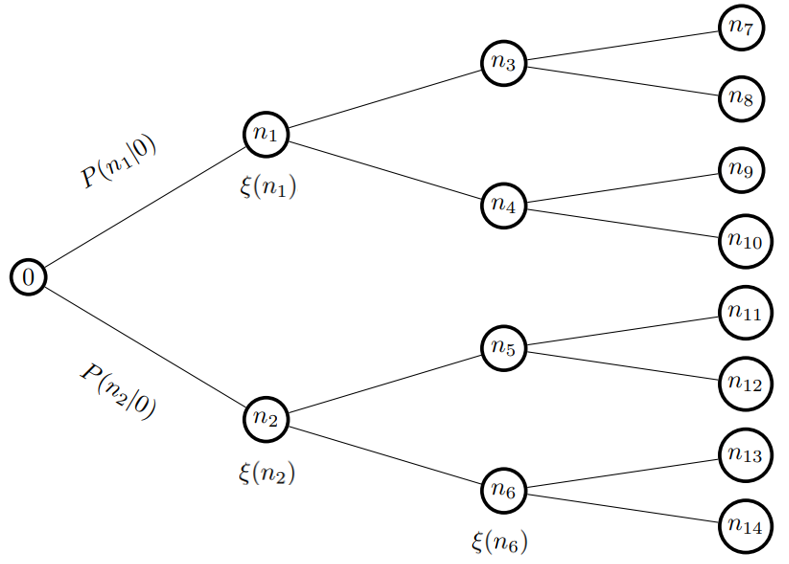}
    \caption{Scenario tree notations.}
    \label{fig:illustration_filtration}
\end{figure}

\if{
In what follows we only define trees but there is an equivalence between a familly of discret stochastic processes joined with its filtration and a scenario tree (see \cite{Kovacevic}). A scenario can be described as a random vector $\xi$ having a probability measure $\mu$, it can be compared to another scenario tree with a distance function defined between probability distributions such as the Wasserstein distance. In fact, the Wasserstein distance has been widely employed to compare trees and is the base of a lot of reduction tree methods. However it can be shown that it is not suitable to distinguish stochastic processes with different flows of information (see 2.1 in \cite{Kovacevic}). When comparing trees, we do not want to only compare the scenarios but also the scenarios and the filtration (flow of information) affiliated with them. While the Wasserstein distance is efficient for comparing probability distributions, it is not fitted to take into account the evolution of the available information along the stages of stochastic processes \cite{heitsch2006stability, Kovacevic}. On the other hand, the Nested Distance is able to take explicitly into account this flow of increasing information, being a generalization of the Wasserstein Distance.

}\fi

\subsection{Nested Distance for Trees}

Using the tree notations introduced in \Cref{tree_notations}, the process distance of order $\iota$ between two trees $\mathbf{P}:=$($\Xi^{T+1}, \mathcal{F}, P$) and $\mathbf{P}':=$($\Xi^{T+1}, \mathcal{F}', P'$) is the \emph{discrete Nested Distance for Trees} (NDT). The transport mass between node $i\in\mathcal{N}_t$ and node $j\in\mathcal{N}_t'$ at stage $t\in\{1,\dots, T\}$, is noted $\pi_{i,j}$ or $\pi(i,j)$.


\begin{definition}[Nested Distance for Trees] For $\iota \in [1, \infty)$, the process distance of order $\iota$ between $\mathbf{P}$ and $\mathbf{P'}$ is the $\iota^{th}$ root of the optimal value of the following LP:

\begin{equation}\label{NDT}
\tag{NDT}
{\rm ND}_\iota(\mathbf{P},\mathbf{P}'):=
\left\{
\begin{array}{lll}
\displaystyle \min_{\pi} & \displaystyle \sum_{i\in\mathcal{N}_T, j\in\mathcal{N}_T' } \pi(i,j) \mathtt{d}_{i,j}^\iota\\
\\
\mbox{s.t.}& \sum_{\{j:n\in\mathcal{A}(j)\}}\pi(i,j|m,n)=P(i|m),& (m\in\mathcal{A}(i),n)\\
\\
&\sum_{\{i:m\in\mathcal{A}(i)\}}\pi(i,j|m,n)=P'(j|n),& (n\in\mathcal{A}(j),m)\\
\\
&\pi_{i,j}\geq0 \text{ and } \sum_{i,j}\pi_{i,j}=1.

\end{array}
\right.
\end{equation}
\end{definition}
\if{
This linear programming problem (LP) can be rather large and challenging to solve directly, this is why Kovacevic and Pichler \cite{Kovacevic} introduced an efficient recursive method. Then \cite{pichler2022nested} developed an even faster algorithm based on the Sinkhorn algorithm. In the literature, this distance is used and conveniently computed with the recursive exact LP resolution or the Sinkhorn alternative algorithm.
}\fi
Note that \eqref{NDT} is a generalization of the Wasserstein distance. Indeed, the transport plan $\pi$ does not only respect the marginals imposed by $P$ and $P'$ but also respects the conditional marginals. These constraints embed the filtration in the definition of the distance between trees. Note that in the following, the term \emph{ND} is used to refer to the value of \eqref{NDT}. 
\\

\subsection{The Kovacevic and Pichler algorithm for scenario tree reduction} \label{KP algo}
The scenario tree reduction mechanism involves solving structure-like \eqref{NDT} problems, between the original tree and the smaller one. The latter has given filtration (same number of stages but considerably fewer number of nodes than the original tree), initialized probabilities and quantizer values. 

\if{\\
Even though, the distance \eqref{NDT} to measure the closeness between scenario trees is broadly adopted, no effective method to reduce general scenario trees based directly on this distance has been proposed since Kovacevic and Pichler \cite{Kovacevic} (2015), because this approach has been judged too computationally demanding. An exception is \cite{beltran2017application}, which focuses on the particular class of stage-wise independent scenario trees.
}\fi
The scenario reduction optimization problem is non-convex, due to the optimization of both quantizers and probabilities. The method in \cite{Kovacevic} operates a classical block coordinate optimization scheme. As illustrated in \Cref{algo_KP_dessin}, after a filtration is chosen, the first step is the optimization of the probabilities $P'$ for fixed quantizers, and the second step is the optimization of the quantizer values $\{\xi'(n)\in \Xi:n\in\mathcal{N}'\}$ for fixed probability. The latter step has an exact analytical solution in the Euclidean case, i.e. $\iota=2$, and  when $\tilde{\mathtt{d}}$ in \eqref{1} is the Euclidean norm \cite{Kovacevic}. The probability optimization is more difficult because it requires solving multiple (potentially large-scale) LPs. In the remainder of this work, we will only develop the probability optimization, the quantizer optimization will only be recalled briefly in \Cref{alg_reduc}.\\

\begin{figure}[h] 
    \centering
    \includegraphics[width=1\textwidth]{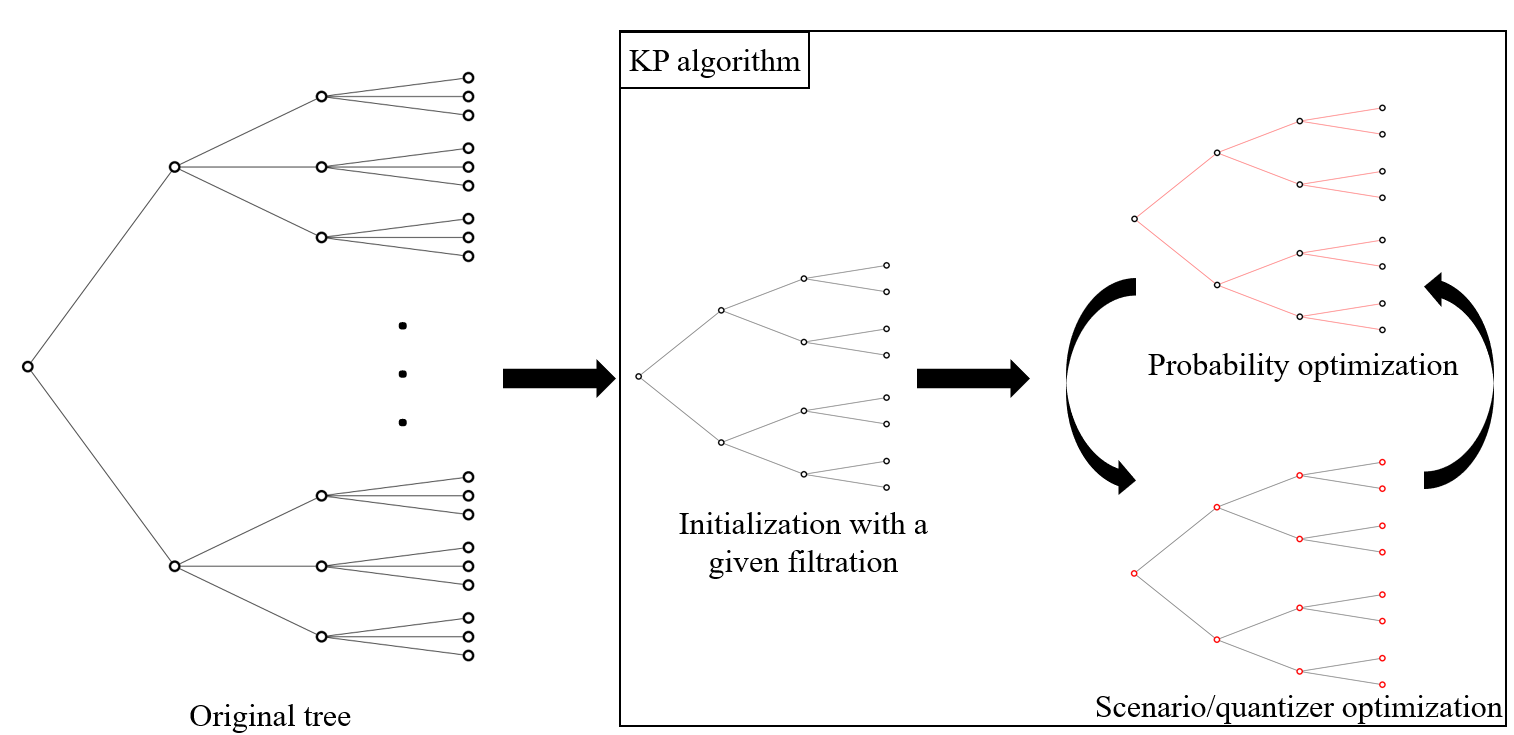}
    \caption{A general scheme of the Kovacevic and Pichler algorithm: to approximate a tree, a smaller tree with a given filtration is improved in order to minimize the nested distance with the original tree. The probabilities and the quantizers are alternatively optimized until convergence.} \label{algo_KP_dessin}
\end{figure}

The probability optimization steps can be stated as follows: given the stochastic process quantizers $\{\xi'(n)\in \Xi:n\in\mathcal{N}'\}$ and structure of $(\mathcal{N}', A')$, we are looking for the optimal probability measure $P'$ to approximate $\mathbf{P}:=$($\Xi^{T+1}, \mathcal{F}, P$), regarding the nested distance.  Inspecting formulation \eqref{NDT}, it turns out that $P'$ can be computed by jointly optimizing with respect to $\pi_{i,j}$  and $P'(j|j -)$.
This leads to the following large non-convex optimization problem:
\begin{equation}  \label{reduc_pb}
\left\{
\begin{array}{llllllllll}
\displaystyle \min_{\pi, P'} & \displaystyle \sum_{i\in\mathcal{N}_T, j\in\mathcal{N}_T' } \pi(i,j) \mathtt{d}_{i,j}^\iota\\
 \mbox{ }\\
\\
\mbox{s.t.}& \sum_{j\in n+}\pi(i,j|m,n)=P(i|m),& (\forall m\in\mathcal{A}(i),n)\\
\\
&\sum_{i\in m+}\pi(i,j|m,n)=P'(j|n),& (\forall n\in\mathcal{A}(j),m)\\
\\
&\pi_{i,j}\geq 0 \text{ and } \sum_{i,j}\pi_{i,j}=1\\
\\
&P'(j|j -)\geq 0.

\end{array}
\right.
\end{equation}
This is a bilinear problem, hence difficult to handle: there is a large number of decision variables and bilinear constraints (issued by the conditional probabilities in the second group of constraints, which involve the decision variables composing $\pi$ and $P'$).
Using the conditional probabilities  $\pi(i,j) = \pi(i,j|m,n) \times \pi(m,n)$, we can derive the recursive formula \eqref{subpb} below.\\
Let $\delta_\iota(m,n):= \sum_{i\in m+, j\in n+}\pi(i,j|m,n) \delta_\iota(i,j)$ for $m\in\mathcal{N}_t, n\in\mathcal{N}_t'$, and $\delta_\iota(i,j)= \mathtt{d}_\iota(\xi_i, \xi'_j)^\iota=: \mathtt{d}_{i,j}^\iota$ for the leaves $i$, $j$ of the trees.
\begin{subequations}
\begin{align}\label{subpb}
    \sum_{i\in\mathcal{N}_T, j\in\mathcal{N}_T' } \pi(i,j) \mathtt{d}_{i,j}^\iota &= \sum_{i\in\mathcal{N}_T, j\in\mathcal{N}_T' } \pi(i,j) \delta_\iota (i,j)\\
    &= \sum_{i\in\mathcal{N}_T, j\in\mathcal{N}_T' } \sum_{m \in i-, n \in j-} \pi(i,j\vert m, n) \pi(m, n) \delta_\iota(i,j)\\
    &=\sum_{n\in\mathcal{N}_{T-1}'}\sum_{m\in\mathcal{N}_{T-1}}\pi(m,n)\underbrace{\sum_{i\in m+, j\in n+}\pi(i,j|m,n) \delta_\iota(i,j)}_{\delta_\iota(m,n)} \\
     &= 
    \sum_{n\in\mathcal{N}_{T-1}'}\sum_{m\in\mathcal{N}_{T-1}}\pi(m,n) \delta_\iota(m,n) . 
\end{align}
\end{subequations}

Note also $\delta_\iota(0,0) = {\rm ND}(\mathbf{P}, \mathbf{P}')$, recursively. Thanks to this recursive formula, the problem can be split into recursive smaller problems for $m\in \mathcal{N}_t$ and $n\in\mathcal{N}_t'$: the conditional probability $\pi(\cdot,\cdot|m,n)$ is a solution to 
\begin{equation}  \label{recursive_pb}
\tag{RP}
\left\{
\begin{array}{llllllllll}
\displaystyle \min_{\pi} & \sum_{m\in\mathcal{N}_t} \pi(m,n)\sum_{i\in m+, j\in n+}\pi(i,j|m,n)\delta_\iota(i,j) \\
 \mbox{ }\\
\\
\mbox{s.t.}& \sum_{j\in n+}\pi(i,j|m,n)=P(i|m),& (i \in m+)\\
\\
&\sum_{i\in m+}\pi(i,j|m,n)= \sum_{i\in \tilde m +} \pi(i,j|\tilde m,n) ,& (j\in n+ \text{ and } m,\tilde m \in \mathcal{N}_t)\\
\\
&\pi(i,j|m,n)\geq 0.

\end{array}
\right.
\end{equation}
This reformulation of the problem is still bilinear, however, to overcome this difficulty, \cite{Kovacevic} proposes to fix $\pi(m,n)$ with the values computed from the previous iteration (or initialized at first iteration) giving rise to a LP approximation. The authors have empirically shown  that after few iterations of their algorithm the values assigned to $\pi(m,n)$ stabilize.

Despite this approximation, the problem is still challenging due to its huge dimensions. Thus,
the method's main limitation is its computational burden that becomes prohibitive for large-scale scenario trees, as exemplified in \Cref{Applications}. The reason is that the method requires solving potentially large-scale LPs as in \eqref{recursive_pb} repeatedly. We address the challenge by noticing that \eqref{recursive_pb} with fixed $\pi(m,n)$ for $m\in\mathcal{N}_t$ for a given $n$ is a \emph{Wassertein barycenter problem}  \eqref{WB}, for which specialized and efficient algorithms exist. 


\section{The Probability Optimization Step \eqref{recursive_pb} is a Wasserstein Barycenters Problem} \label{Our new approach}
We start with the Wasserstein distance definition.
\if{
\begin{definition}[Wasserstein Distance] \label{WD def}
Let $(\Omega,\mathtt{d})$ be a metric space and $P(\Omega)$ the set of Borel probability measures on $\Omega$. 
For $\iota \in [1, \infty)$, and probability measures $\mu$ and $\nu$ in  $P(\Omega)$. Their $\iota$-Wasserstein distance $W_\iota$ is :
\begin{equation}\label{discret_WD}
\tag{WD}
W_\iota(\mu,\nu):=\left(\min_{\pi \in U(\mu,\nu)} \iint_{\Omega\times \Omega} \mathtt{d}(\xi,\xi')^\iota d\pi(\xi,\xi')\right)^{1/\iota},
\end{equation}
where $U(\mu,\nu)$ is the set of all probability measures on $\Omega\times \Omega$ having marginals $\mu$ and $\nu$. 
We denote by $W_\iota^\iota(\mu,\nu)$, $W_\iota$ to the power $\iota$ , i.e.  $W_\iota^\iota(\mu,\nu) := (W_\iota(\mu,\nu))^\iota$. 
\end{definition}
With, for $\tau\geq 0$ a given scalar, $\Delta_n(\tau):=\col{u \in\R^n_+:\; \sum_{i=1}^nu_i=\tau}$ is the set of non negative vectors in $\R^n$ adding up to $\tau$.
If $\tau=1$, then $\Delta_n(\tau)$, denoted simply by $\Delta_n$, is the $n+1$ simplex.\\

}\fi

\begin{definition}[Wasserstein Distance] 
Given two probability measures $\mu,\nu \in P(\R^d)$, their Wasserstein distance is the $\iota$-th root of 
\begin{equation}\label{WD}
W_\iota^\iota(\mu,\nu):= \displaystyle \min_{\pi \in U(\mu,\nu)} \langle D, \pi \rangle.
    \tag{WD}
\end{equation}

\end{definition}
Where $\langle D, \pi \rangle:=\sum_{r,s}D_{r,s}\pi_{r,s}$. The cost matrix $D$ is composed by the distance values $(\mathtt{d}_{r,s})_{(r \times s)\in[1,R]\times[1,S]}$, where $R$ and $S$ are the support sizes of $\mu$ and $\nu$ respectively; $\pi$ is the transport matrix between the two probability densities; and $U(\mu,\nu)$ is the set of all transport plans having marginals $\mu$ and $\nu$.

\begin{definition}[Wasserstein Barycenter] 
Given $M$ measures
$\{\nu^1,\ldots,\nu^M\}$ in $ P(\R^d)$, an $\iota$-\emph{Wasserstein barycenter} with weights $\alpha \in \Delta_M$ is a solution to the following optimization problem:
\begin{equation}\label{WB2}
\min_{\mu \in P(\R^d)}\; \sum_{m=1}^M \alpha_m W_\iota^\iota(\mu,\nu^m)\,.
\end{equation}
\end{definition}

In what follows we present one of our contributions.
\begin{proposition}
The recursive problem \eqref{recursive_pb} with fixed $\pi(m,n)$ for $m\in\mathcal{N}_t$ and a given $n$ is a Wasserstein Barycenter problem.
\end{proposition}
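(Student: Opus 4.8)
The plan is to exhibit an explicit dictionary between the data of \eqref{recursive_pb} (with the node $n$ and the masses $\pi(m,n)$ held fixed) and the ingredients of the barycenter problem \eqref{WB2}. First I would set $P'(n):=\sum_{m\in\mathcal{N}_t}\pi(m,n)$ and define the weights $\alpha_m:=\pi(m,n)/P'(n)$, so that $\alpha\in\Delta_{|\mathcal{N}_t|}$. To each $m\in\mathcal{N}_t$ I would associate the measure $\nu^m$ supported on the children $m+$ with masses $P(i|m)$, $i\in m+$; these are the $M=|\mathcal{N}_t|$ input measures of the barycenter. The object that will play the role of the unknown barycenter $\mu$ is the common marginal over the children $n+$, namely $\mu_j:=\sum_{i\in m+}\pi(i,j|m,n)$ for $j\in n+$, which is well defined precisely because the second group of constraints makes this sum independent of $m$.

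Next I would reinterpret the constraints of \eqref{recursive_pb} as transport-plan constraints. The first group, $\sum_{j\in n+}\pi(i,j|m,n)=P(i|m)$, fixes the $i$-marginal of each conditional plan $\pi(\cdot,\cdot|m,n)$ to be $\nu^m$. The second group, $\sum_{i\in m+}\pi(i,j|m,n)=\sum_{i\in\tilde m+}\pi(i,j|\tilde m,n)$ for all $m,\tilde m$, forces the $j$-marginals of all these plans to coincide; this common value is exactly $\mu$ defined above, and it is a \emph{free} decision variable, automatically a probability vector since $\sum_{j}\mu_j=\sum_{j}\sum_{i}\pi(i,j|m,n)=\sum_{i}P(i|m)=1$. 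Together with $\pi(\cdot,\cdot|m,n)\geq 0$, this says precisely that $\pi(\cdot,\cdot|m,n)\in U(\nu^m,\mu)$ in the sense of \eqref{WD}, with ground cost matrix $D=\bigl(\delta_\iota(i,j)\bigr)_{i\in m+,\,j\in n+}$.

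With this dictionary in place, the objective of \eqref{recursive_pb} reads $\sum_{m}\pi(m,n)\,\langle\delta_\iota,\pi(\cdot,\cdot|m,n)\rangle$. Since the masses $\pi(m,n)$ are fixed and each plan enters only its own summand, I would minimize summand-by-summand over the couplings for a fixed $\mu$: by \eqref{WD}, $\min_{\pi(\cdot,\cdot|m,n)\in U(\nu^m,\mu)}\langle\delta_\iota,\pi(\cdot,\cdot|m,n)\rangle=W_\iota^\iota(\nu^m,\mu)$ with ground cost $\delta_\iota$. Substituting and factoring out the constant $P'(n)>0$ turns \eqref{recursive_pb} into $P'(n)\cdot\min_{\mu}\sum_{m}\alpha_m\,W_\iota^\iota(\nu^m,\mu)$, which is the Wasserstein barycenter problem \eqref{WB2} up to the positive multiplicative constant $P'(n)$ that does not affect the minimizer.

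The step I expect to require the most care is the legitimacy of collapsing the joint minimization over all conditional plans into the nested form $\min_\mu\min_{\text{plans}}$. This rests on the observation that the only variable coupling the distinct summands is the shared marginal $\mu$; once $\mu$ is fixed, the couplings $\pi(\cdot,\cdot|m,n)$ decouple across $m$ and can be optimized independently, each yielding its own $W_\iota^\iota(\nu^m,\mu)$. I would also make explicit that the recovered barycenter is a \emph{fixed-support} one: its support is the children set $n+$ with support points $\{\xi'(j):j\in n+\}$ and ground cost given by the recursively accumulated values $\delta_\iota$, so that the optimal $\mu$ is exactly the sought conditional probability $P'(\cdot|n)$, consistent with the marginal constraints of the original formulation \eqref{NDT}.
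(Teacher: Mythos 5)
Your proof is correct and follows essentially the same route as the paper's: you identify the input measures $\nu^m=(P(i|m))_{i\in m+}$, the weights $\alpha_m\propto\pi(m,n)$, and the shared column marginal $P'(\cdot|n)$ (your $\mu$, which the paper likewise introduces as an auxiliary probability vector) so that \eqref{recursive_pb} becomes the fixed-support Wasserstein barycenter problem. The only differences are cosmetic: you normalize the weights by $P'(n)$ so they lie in the simplex and justify the equivalence by an explicit partial-minimization (decoupling) argument over the shared marginal, whereas the paper instead matches the constraint structure of the rewritten problem \eqref{WB} against the known LP formulation \eqref{HugeLP} of \eqref{WB2} cited from \cite{MAM}.
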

\begin{proof}
Let $M$ empirical (discrete) measures $\nu^m$ having finite support sets:
\begin{equation}\label{eq:empirical}
{\tt supp}(\nu^m):=\col{\xi'^m_1,\ldots,\xi'^m_{S^m}}
\quad \mbox{and} \quad   \nu^m=\sum_{s=1}^{S^m} q^m_s \underline{\delta}_{\xi'^m_s},
\end{equation}
with $\underline{\delta}_u$ the Dirac unit mass on $u \in \Re^d$ and $q^m\in \Delta_{S^m}$, $m=1,\ldots,M$.\\
A barycenter $\mu:= \sum_{r=1}^R  p_r \underline{\delta}_{\xi_r}$ with $p \in \Delta_R$ of the family $(\nu^m)_{m\in[1,M]}$ is a solution to \eqref{WB2}.

Problem \eqref{WB2} can be equivalently formulated as a LP, where $\pi^m$ for $m=1,\dots,M$ denote the transport plan between the $\mu$ and the probability measures, see Section 2 of \cite{MAM}:

\begin{equation}  \label{HugeLP}
\left\{
\begin{array}{llllllllll}
\displaystyle \min_{p,\pi} & \displaystyle  \alpha_1 \sum_{r=1}^R \sum_{s=1}^{S^1} \mathtt{d}^1_{rs}\pi^1_{rs}&+\cdots +& \alpha_M \displaystyle\sum_{r=1}^R
 \sum_{s=1}^{S^M}  \mathtt{d}^M_{rs}\pi^M_{rs}\\
 \mbox{ }\\
 \mbox{s.t.} & \sum_{r=1}^R \pi^1_{rs} &&&\hspace{-1.3cm}= q^1_s,&\;s=1,\ldots,S^1 \\
 &&\ddots &&\hspace{-1.3cm}\vdots\\
 &&&\sum_{r=1}^R \pi^M_{rs} &\hspace{-1.3cm}= q^M_s,&\;s=1,\ldots,S^M\\[1em]
 &\sum_{s=1}^{S^1} \pi^1_{rs} &&&\hspace{-1.3cm}= p_r,&\;r=1,\ldots,R\\
 &&\ddots &&\hspace{-1.3cm}\vdots\\
  &&&\sum_{s=1}^{S^M} \pi^M_{rs} &\hspace{-1.3cm}= p_r,&\;r=1,\ldots,R\\
  \\
  & p \in \Delta_R,\pi^1\geq 0&\cdots &\pi^M\geq 0,
\end{array}
\right.
\end{equation}

Let us now show that~\eqref{recursive_pb} fits into this structure.\\ 
Let $\alpha_{m_i}^n:=\pi(m_i,n)$ for $i=1,\dots,M$, $M=|\mathcal{N}_t|$, and $(P'(j|n))_{j\in n+}$ an auxiliary vector of probabilities allowing to reformulate and couple the last group of constraints in~\eqref{recursive_pb}:$\sum_{i\in m+}\pi(i,j|m,n)=P'(j|n)$ for $j\in n+$, for all $m=m_1,\dots,m_M$. 
Given $t\in\{1,\dots,T\}$ and $n\in\mathcal{N}_t'$, problem \eqref{recursive_pb}  reads as:
\begin{equation}  \label{WB}
\tag{WB}
\left\{
\begin{array}{llllllllll}
\displaystyle \min_{P', \pi} & \displaystyle \alpha_1^n\sum_{i\in m_1+, j\in n+}\pi(i,j|m_1,n)\delta_\iota(i,j)&+\cdots +& \alpha_M^n \displaystyle \sum_{i\in m_M+, j\in n+}\pi(i,j|m_M,n)\delta_\iota(i,j)\\
 \mbox{ }\\
 \mbox{s.t.} & \sum_{j\in n+}\pi(i,j|m_1,n) &&=P(i|m_1),&\hspace{-3.4cm}\;(i \in m_1+) \\
 [1em]
 &&\hspace{-3cm}\ddots &&\hspace{-5.5cm}\vdots\\
  &&\hspace{-2.3cm} \sum_{j\in n+}\pi(i,j|m_M,n) &=P(i|m_M),&\hspace{-3.4cm}\;(i \in m_M+) \\[1em]
 &\sum_{i\in m_1+} \pi(i,j|m_1,n)  &&= P'(j|n),&\hspace{-3.4cm}\;(j\in n+)\\
 &&\hspace{-3cm}\ddots &&\hspace{-5.5cm}\vdots\\
 &&\hspace{-2.3cm}\sum_{i\in m_M+} \pi(i,j|m_M,n) &= P'(j|n),&\hspace{-3.4cm}\;(j\in n+)\\
  \\
  & \sum_{j\in n+}P'(j|n)=1,\pi(i,j|m_1,n)\geq 0&\cdots &\pi(i,j|m_M,n)\geq 0,
\end{array}
\right.
\end{equation}
which is equivalent to
\[
\displaystyle \min_{P'(.|n)\geq 0} \sum_{m\in\mathcal{N}_t} \alpha_m^n W_\iota^\iota((P'(j|n))_{j\in n+},(P(i|m))_{i\in m+}) \quad\mbox{s.t.}\quad \sum_{j\in n+}P'(j|n)=1,
\]
i.e., a Wasserstein Barycenter problem.

\end{proof}


To further illustrate our interpretation of problem~\eqref{recursive_pb} as a Wassertein Barycenter problem, see \Cref{bary_tree}.
The boxed subtree of the approximated (smaller) tree forms a probability measure with support $(n_7, n_8)$ and probability $(P(n_7|n_3), P(n_8|n_3))$; the probabilities of this subtree helping to minimize the ND is a barycenter of the (original tree's) subtrees with initial nodes $m_3, m_4, m_5, m_6$.
The Wasserstein Barycenter $(P(n_7|n_3), P(n_8|n_3))$ is computed by solving~\eqref{WB}, the next (approximated tree's) subtree, that is the one issued by node $n_4$ is solved in the same manner etc.
The probability $(P(n_9|n_4), P(n_{10}|n_4))$ are computed by solving another barycenter problem, but with different weights $ \alpha_m^{n_4}$ (and costs) for $m\in\{m_3, m_4, m_5, m_6\}$ (see \cref{WB}).
\begin{figure}[h] 
    \centering
    \includegraphics[width=1.\textwidth]{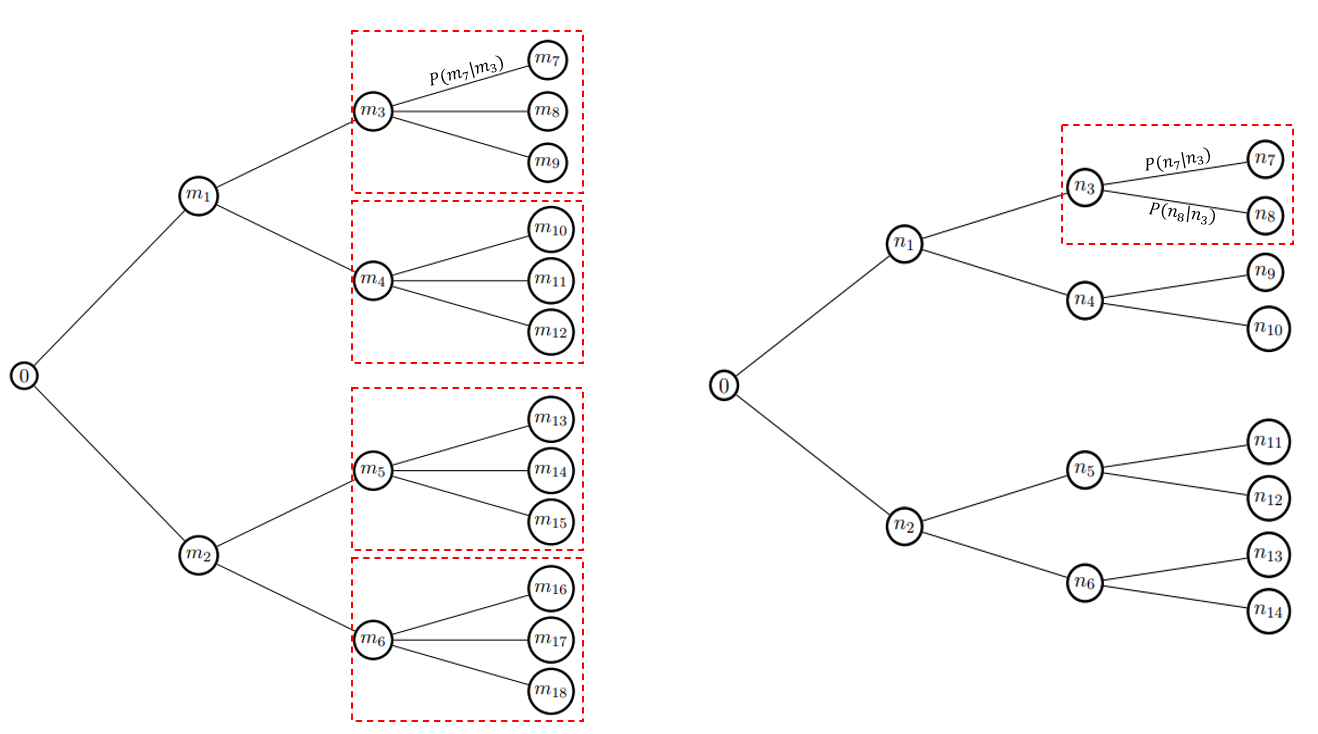}
    \caption{\textit{(left)} Original tree, \textit{(right)} Approximated tree. The probabilities $(P'(n_7|n_3), P'(n_8|n_3))$ are computed as the Wasserstein Barycenter of the set of (known) probabilities associated to the boxed subtrees on the left.}
    \label{bary_tree}
\end{figure}

It is thus clear that several Wasserstein Barycenter problems must be solved at every iteration of the scenario tree reduction algorithm \cite{Kovacevic}. The faster the computation of such barycenters, the faster the Kovacevic and Pichler's algorithm.

\subsection{Wasserstein barycenters techniques for scenario reduction}

Problem from \eqref{WB} is an LP and could, in principle, be solved by LP solvers such as Gurobi, Cplex, HiGHs, and others. However, in real life applications of scenario tree reduction, problem \eqref{WB} (equivalently problem from \eqref{recursive_pb})  can have huge dimensions and be  intractable by LP solvers, thus hindering the whole scenario tree reduction process. 
Many methods can be employed to tackle problem \eqref{WB} (for e.g. \cite{Cuturi_Doucet_14,J.Ye,IBP}). 
Note that these methods are typically employed to retrieve a barycentric probability measure. However, in the context of this work, we instead focus on directly extracting the transport plans between the measures and the barycenter. We will further develop these concepts in the following sections. In the remainder of the paper, we use two different WB algorithms, namely the Method of Averaged Marginals (MAM) \cite{MAM} and the Iterative Bregmann Projection algorithm (IBP) \cite{IBP}. 
In the following, we simplify the notation by denoting: $\pi(\cdot,\cdot|m_{\hat m},n):=\pi^{\hat m}\in \R^{R\times S^{\hat m}}$ for all $\hat m=1,\dots, M$, since $n$ is fixed in \eqref{WB}, and $P(\cdot|m)$ in \eqref{WB} is denoted by $q^{\hat m}$.

\subsubsection{The Method of Averaged Marginals (MAM)}\label{MAM_recall}

MAM's main idea \cite{MAM} is to reformulate \eqref{WB} as the problem of finding a zero of the sum of two maximal monotone operators, which can be solved by the celebrated Douglas-Rachford algorithm \cite{Bauschke_Combettes_2017,Douglas_Rachford_1956}.
When specialized to \eqref{WB}, the paper \cite{MAM} shows that Douglas-Rachford algorithm boils down to \Cref{alg_MAM}.

\begin{algorithm}
\caption{\sc \sc Method of Averaged Marginals - MAM}
  \label{alg_MAM}
\begin{algorithmic}

\State {\bf Input}: Initial plan
 $\pi=(\pi^{1},\ldots, \pi^{m}) $ and parameter  $\rho>0$ 
  \State Set $S^{m} \gets \mid$ {\tt supp}$(q^m) \mid$, for $m=1,\dots,M$
 \State Define  $a_m \gets
 (\frac{1}{S^{m}})/(\sum_{j=1}^M\frac{1}{S^{j}})$ and set $p^{m} \gets \sum_{s=1}^{S^{m}}\pi_{rs}^{m}$, $m=1,\ldots,M$
 \State Set $D^{m} \gets \alpha_m \left( \delta_\iota (i,j) \right)_{(i,j)\in m+ \times n+}$ and set $q^m=\left(P(i|m)\right)_{i\in m+}$

\Statex
\While{not converged}
\Statex
\State $p\gets \sum_{m=1}^M a_mp^{m}$ \Comment{Average the marginals}
\Statex
\For{$m=1,\ldots,M$}
\For{$s=1,\ldots,S^{m}$} 
\State $\pi^{m}_{:s} \gets {\tt Proj}_{\Delta(q_s^{m})} \Big(\pi^{m}_{:s} +2\frac{p-p^{m}}{S^{m}}- \frac{1}{\rho}D^{m}_{:s} \Big)-\frac{p-p^{m}}{S^{m}}$
\EndFor
\State $p^{m} \gets \sum_{s=1}^{S^{m}}\pi_{rs}^{m}$ \Comment{Update the $m^{th}$ marginal}
\EndFor
\Statex
\EndWhile
  \end{algorithmic}
\end{algorithm}
The projection step can be performed  \emph{exactly}, and in parallel, by using efficient specialized methods \cite{Condat_2016}. We employ the notation  $\pi^{m}_{:s}$ to denote the $s^{th}$ column of the matrix $\pi^m$.\\
The \emph{Method of Averaged Marginals}, depicted in Algorithm~\ref{alg_MAM}, leverages the transport plans 
from which one can extract marginals whose average is actually a barycenter approximation. 
The MAM algorithm asymptotically solves \eqref{WB} and produces both a barycenter and associate transport plans that are needed for the recursion problem \eqref{recursive_pb}. Note that Step 2 and 3 can be computed in parallel over the $M$ measures. We refer the interested reader to \cite{MAM} for more details.

\subsubsection{Computation of Transport Plans via Regularized Techniques} \label{pen_meth}

Utilizing efficient regularized methods enables us to quickly compute the transport plans required for the scenario tree reduction approach. For example, the \emph{Iterative Bregman Projection} (IBP) \cite{IBP} is a state-of-the-art technique, that applies regularization to the optimization problems presented in \eqref{WD} and \eqref{WB}.\\

\noindent Consider the entropic function:
\begin{equation}
    E(\pi) := \displaystyle \sum_{r,s}\pi_{r,s}(\log(\pi_{r,s})-1),
    \label{entropy}
\end{equation}
with the convention $0 \log(0) = 0$.
This is a strongly convex function which assures that $E(\pi)\ge0$ and $E(\pi)=0$ if and only if $\pi=0$. This function is employed to regularize the LP \eqref{WD}, leading to the following nonlinear optimization problem:
\begin{subequations}
\begin{align}
W_\lambda(\mu,\nu) &:= \displaystyle \min_{\pi \in U(\mu,\nu)} \langle D, \pi \rangle + \frac{1}{\lambda} E(\pi) \\
    &= \displaystyle \min_{\pi \in U(\mu,\nu)} \displaystyle \frac{1}{\lambda} \sum_{r,s} \left(\lambda D_{r,s}\pi_{r,s}+\pi_{r,s}\log(\pi_{r,s})-\pi_{r,s} \right)\\
    &= \displaystyle \min_{\pi \in U(\mu,\nu)} KL(\pi|K)
\end{align}\label{WDE}
\end{subequations}
Note that $KL$ is the Kullback-Leibler divergence between $\pi,K\in\R^{R\times S}$ and $K_{r,s}>0$ for all $(r,s)$: $ KL(\pi|K) := \displaystyle \sum_{r,s}\pi_{r,s} \left( \log \left(\frac{\pi_{r,s}}{K_{r,s}} \right) -1 \right)$ (the precise definition of matrix $K$ is given in the algorithm below).\\
By noticing that $p=\pi \mathds{1}_R$, problem \eqref{WB} with $W_\lambda$ introduced in \eqref{WDE}, instead of the classical Wasserstein distance, writes:
\begin{equation}\label{reg_sink_WD}
    \displaystyle \min_{\begin{tabular}{c}$\pi^m\in U(\mu,\nu^m)$,\\ $m=1,\dots,M$\end{tabular}} \sum_{m=1}^M \alpha_m KL(\pi^m|K^m)
\end{equation}
Since problem \eqref{reg_sink_WD} is 
strongly convex, it has a unique optimal solution. Following \cite{IBP}, the optimal coupling $\pi^m, m=1,\dots,M$ can be derived after iterative KL projections onto the right and left constraints, embodied by the sets $U(\mu,\nu^m), m=1,\dots,M$.\\

\begin{algorithm}
\caption{\sc IBP algorithm}
 
\begin{algorithmic}
\State {\bf Input}: Given $\alpha_m$ for $m=1,\dots,M$, $\lambda>0$, initialize $v^{0}$ and $u^{0}$  with an arbitrary positive vector, for example $\mathds{1}_S$
Initialize $p^0$, for example $\mathds{1}_R/R$
\State Set $D^{m} \gets \alpha_m \left( \delta_\iota (i,j) \right)_{(i,j)\in m+ \times n+}$ and set $q^m=\left(P(i|m)\right)_{i\in m+}$
\State Define $K^m=e^{-\lambda D^m}$ for all $m=1,\dots,M$
\While{not converged}
\Statex \Comment{Projections onto the constraints}
\For{m=1,\dots,M}
\State  $v^{m,k+1} = \frac{q^m}{(K^m)^T u^{m,k}} $
\State  $u^{m,k+1} = \frac{p^{k+1}}{K^m v^{m,k+1}}$
\EndFor
\Statex
\Statex \Comment{Approximation of the barycenter}
\State $p^{k+1} = \prod_{m=1}^M (K^m v^{m,k+1})^{\alpha_m}$
\EndWhile
\Statex
\State \Return $\pi^m = \diag(u^m)K^m\diag(v^m)$ for all $m=1,\dots,M$
\end{algorithmic} \label{alg_sink}
\end{algorithm}

Observe that all the algorithm's steps consist of matrix-vector multiplication, and are thus simple to execute. Note that $p^{k+1}$ is the current estimate of the barycenter in \Cref{alg_sink}. The algorithm's drawback is its accuracy, which strongly depends on $\lambda>0$. The greater is $\lambda$ the closer is the solution of \eqref{reg_sink_WD} to an exact solution of \eqref{WB}. However, if $\lambda$ is too large, the values of $K$ diverge, leading to computational issues such as double-precision overflow errors.

\subsection{Boosted Kovacevic and Pichler's Algorithm}
We rely on the previous subsections about Wasserstein barycenters computation methods to provide the following improved variant of the scenario tree reduction algorithm of  \cite{Kovacevic}. In Algorithm \ref{alg_reduc}, given a multistage scenario tree 
represented by $\mathbf{P}=(\Xi^{T+1}, \mathcal{F},P)$ 
a smaller scenario tree $\mathbf{P}'=(\Xi^{T+1}, \mathcal{F}',P')$ is constructed by updating the quantizer values $\{\xi(n)\in \Xi:n\in\mathcal{N}\}$ and probability $P'$, iteratively. The filtration $\mathcal{F}'$ is 
a data given to the algorithm. In other words, the number of scenarios and the structure of the reduced tree is an input data, and the algorithm seeks for the familly $\{\xi'(n)\in \Xi:n\in\mathcal{N}'\}$ and $P'$  that minimizes the nested distance between $\mathbf{P}$ and $\mathbf{P}'$.

\begin{algorithm}[h!]
\caption{\sc Scenario tree reduction via nested distance and Wasserstein barycenters}
  \label{alg_reduc}
\begin{algorithmic}[1]

\Statex \Comment{Step 0: input}
\State Let the original $T$-stage scenario tree $\mathbf{P}=(\Xi^{T+1}, \mathcal{F},P)$ and a smaller scenario tree
$\mathbf{P}^{'0}=(\Xi^{T+1}, \mathcal{F}',P^{'0})$ be given.
\State Compute a transport probabilities $\pi^0(i,j)$  between  scenarios $\left(\xi_{i}\right)_{i\in\mathcal{N}_T}$ and $\left(\xi^{'0}_{j}\right)_{j\in\mathcal{N}_T'}$\label{init_pi}

\State Set $k\gets 0$ and choose a tolerance $\tol>0$

\Statex  
\For{$k=1,2,\ldots$}
\Statex \Comment{\textbf{Step 1:} Improve the scenario values (quantizers)} \label{optim_quant}
\State 
    Set $\xi^{'k+1}(n_t)=\sum_{m\in \mathcal{N}_t}\frac{\pi^k(m,n_t)}{\sum_{i\in\mathcal{N}_t}\pi^k(i,n_t)}\xi_t(m)$ for all $n_t\in\mathcal{N}_t'$ for $t=1,\ldots,T$ 
 \Statex
\Statex \Comment{\textbf{Step 2:} Improve the probabilities}

\State Set $\delta_\iota^{k+1}(i, j)\gets\mathtt{d}(\xi_i, \xi_j)^{\iota}$ for all $i,j\in \mathcal{N}_T$
\For{$t=T-1,\dots, 0$} \Comment{Recursivity}
\For{ all $n \in \mathcal{N}'_t$} \Comment{Wasserstein barycenters} \label{parra}
\State Set $\alpha_m^n \gets \pi^k(m,n)$, $m\in \mathcal{N}_t$
\State Use IBP, or MAM to compute 
$\pi^{k+1}(\cdot,\cdot|\cdot,n)$  solving~\eqref{WB} 
\State Set $\delta_\iota^{k+1}(m,n)\gets \sum_{i\in m+, j\in n+}\pi^{k+1}(i,j|m,n)\delta_\iota^{k+1}(i,j)$, $m\in \mathcal{N}_t$
\EndFor

\EndFor
\Statex
\Statex \Comment{Build $\pi^{k+1}$ the unconditional transport plan matrix}
\State Set $\pi^{k+1}(0,0)\gets 1$
\For{t=1,\dots,T}
\State Compute $\pi^{k+1}(i,j)= \pi^{k+1}(i,j|m,n)\times \pi^{k+1}(m,n)$ for $i\in \mathcal{N}_t, j\in\mathcal{N}'_t$ and $m=i-$, $n=j-$
\EndFor

\Statex
\Statex \Comment{\textbf{Step 3:} Stopping test}
\If{$\delta_\iota^{k}(0,0) -  \delta_\iota^{k+1}(0,0) \leq \tol$}
\State Define $P'(n_T)= \sum_{m_T\in \mathcal{N}_T} \pi^{k+1}(m_T,n_T)$ for all $n_T \in \mathcal{N}'_T$ then $P'(n)= \sum_{j\in n+} P'(j)$ for all $n\in \mathcal{N}'_t, t\ne T$
\State Set ${\rm ND}_\iota(\mathbf{P}, \mathbf{P}')\gets \delta_\iota^{k+1}(0,0)$
\State Stop and return with the reduced tree $\mathbf{P}^{'}=(\Xi^{T+1},\mathcal{F}',P')$ and nested distance ${\rm ND}_\iota(\mathbf{P}, \mathbf{P}')$

\EndIf

\EndFor

  \end{algorithmic}
\end{algorithm}

Some comments on \Cref{alg_reduc} are in order.
\paragraph{Initialization} The probability $P^{'}$ and the scenario values $\{\xi'(n)\in \Xi:n\in\mathcal{N}'\}$ will be updated by the algorithm but the tree structure is fixed. 

We will provide some initialization methods in \Cref{impact_ini_sec}.
\\
Note that \cref{init_pi} is a straightforward operation since the initial transport plan only needs to respect the marginal constraints, therefore $\pi^0(i,j)=\frac{P(i|m)}{|n+|}$ for all $m,n,i\in m+, j\in n+$ is a sufficient initialization. 

\paragraph{Quantizer Optimizations} \Cref{alg_reduc} only considers the Euclidean case ($\iota=2$) in  \cref{optim_quant}, but if $\iota\ne 2$, the quantizer optimization step boils down to a gradient descent (see \cite{Kovacevic} for details) and the algorithm is still applicable although slower.

\paragraph{Hyperparameters} IBP relies on the use of a parameter $\lambda>0$ chosen by the user \cite{IBP,Cuturi}. Such parameter has an impact on the result's accuracy. The MAM algorithm also requires setting a parameter, but it only impacts the convergence speed and can be determined with a sensitivity analysis \cite{MAM}.

\paragraph{Parallelization} MAM is a parallelizable (and randomizable) algorithm, such a feature can also be leveraged in \Cref{alg_reduc}. \Cref{parra} can also be treated in a parallel manner.

\paragraph{Stopping criteria} The given stopping criteria  
is a heuristic: the algorithm terminates when the improvement of the nested distance between the two trees is below a certain level of tolerance $\tol$.

\if{
\paragraph{Storage complexity} The distance matrix $\Delta$ and the transport matrix $\pi$ whose size are equal to the number of nodes of the initial tree times the number of nodes in the approximate tree can be quite large to store but using adequate libraries, such as $numpy$ in $python$, allows for fast computation of the linear operations that are used in the method.
}\fi

\paragraph{Convergence} The algorithm leads to an improvement in each iteration. 
It should be kept in mind that the above algorithm is nothing but a heuristic, as it is  a block-coordinate scheme seeking to minimize the nested distance by alternating minimization over scenarios and then with respect to probabilities. 

\section{Applications} \label{Applications}
This section illustrates the performance of \Cref{alg_reduc} and its behaviour when using different solvers to compute Wasserstein barycenters (i.e. solutions of \eqref{recursive_pb} and \eqref{WB}). All the following applications employ \Cref{alg_reduc} in the Euclidean case, though similar conclusions can be extended to other values of $\iota$. If a standard LP solver is employed, then \Cref{alg_reduc} boils down to the setting of  \cite{Kovacevic}. 
We ensure that the considered solvers attain the same level of precision when tackling the Wasserstein barycenter problems at Step 2 of \Cref{alg_reduc}.  Numerical experiments were conducted using 20 cores (\textit{Intel(R) Xeon(R) Gold 5120 CPU}) and \textit{Python 3.9}, using a \textit{MPI} based parallelization when possible. The test problems and solvers’ codes are available for download at the link \url{https://github.com/dan-mim/Nested_tree_reduction}.

\subsection{Impact of the Tree Size}
In what follows, we consider scenarios trees composed by 4 to 8 stages and 5 to 6 children per nodes, offering numbers of scenarios and spanning from hundreds to ten thousands nodes (see \Cref{time_iteration} for details). The quantizers of both trees are set randomly within the range $[-10,10]$. 
In our preliminary tests, the reduced tree is always binary.
\\



We consider the following variants of the algorithm, by varying the solver used to compute the Wasserstein Barycenter in Step 2 of \Cref{alg_reduc}: 
\begin{itemize}
    \item \emph{LP}:  \Cref{alg_reduc} employing the LP solver HiGHS;
    \item \emph{MAM}:  \Cref{alg_reduc} employing  the \emph{Method of Averaged Marginals} \cite{MAM};
    \item \emph{IBP}:  \Cref{alg_reduc} employing IBP, inspired from the code of G. Peyré \cite{IBP_github}. This algorithm relies on the tune of an hyperparameter that has been preset to guarantee its best efficiency and convergence.
  
\end{itemize}
All variants use $\tol=0.1$ as a tolerance for the stopping test.
It has been empirically verified that results do not improve significantly if we decrease this tolerance further.

\begin{figure}[h!] 
    \centering
    \includegraphics[width=.8\textwidth]{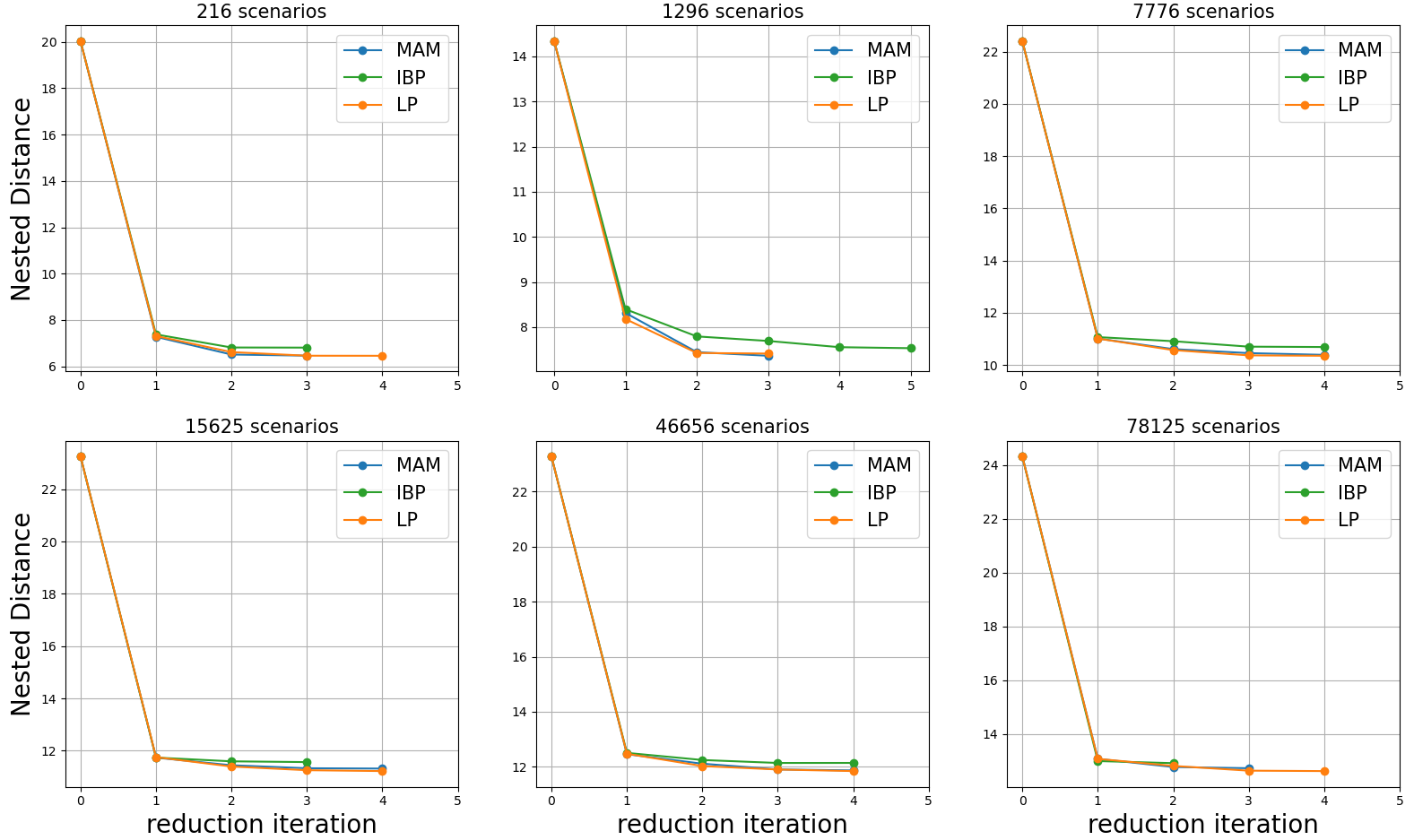}
    \caption{Evolution of the Nested Distance along the reduction iterations for different initial tree sizes. The final reduced tree has always two children per node.} \label{ND_iteration}
\end{figure}

\Cref{ND_iteration} shows that the exact ND between the original tree and the approximate one, iteratively decreases, no matter the variant of \Cref{alg_reduc}. 
All variants are initialized with the same tree, generated randomly (probabilities and scenarios). Note that the initial ND is always at least halved after the reduction. This emphasizes how important is the use of a reduction method. Within this scale, one can see that every variant converges to approximately the same precision, although not necessarily to the same reduced scenario tree (because different solvers compute different optimal transportation plans, impacting the construction of scenarios composing the reduced tree).

\begin{figure}[h!] 
    \centering
    \includegraphics[width=1\textwidth]{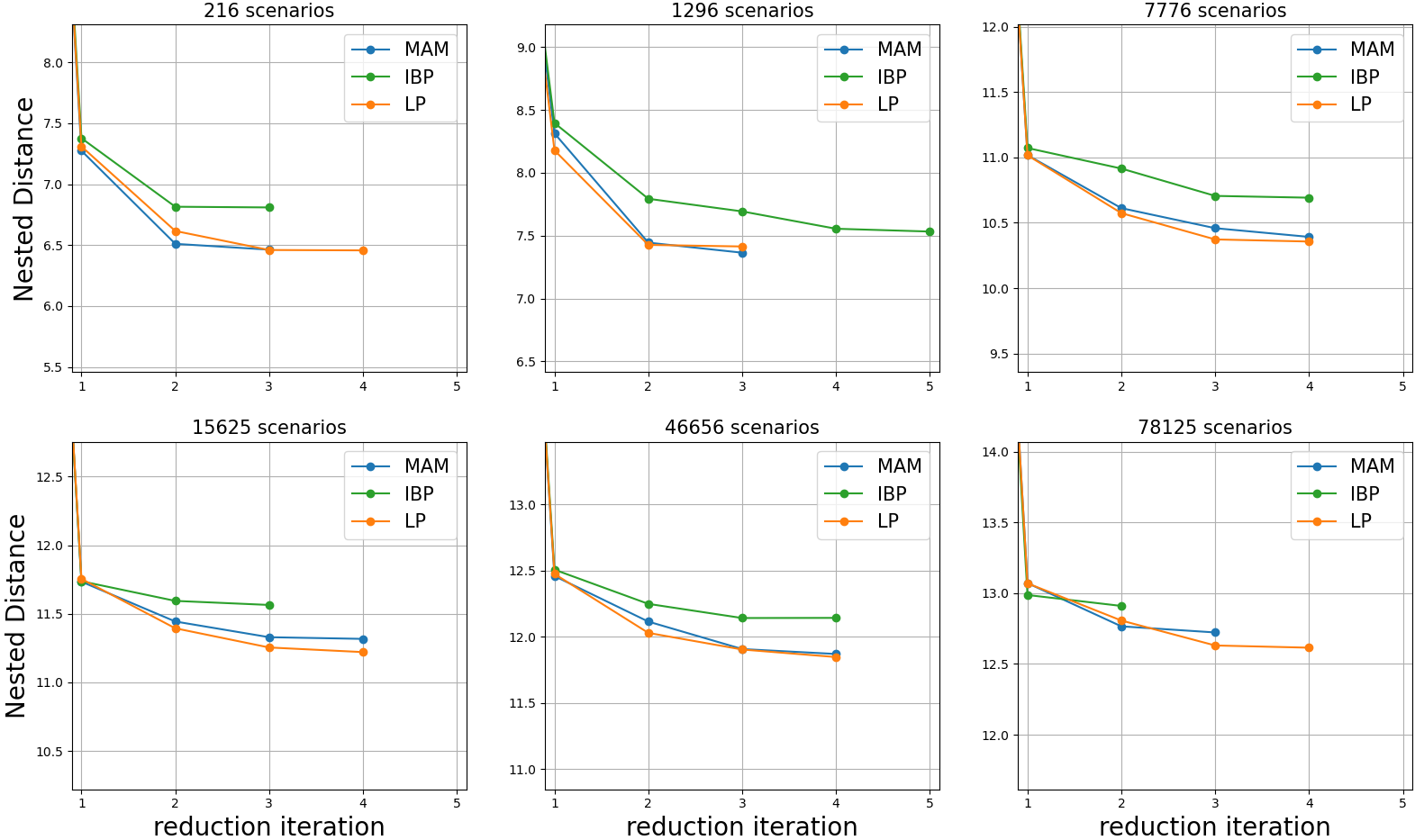}
    \caption{Evolution of the Nested Distance along the reduction iterations for different initial tree sizes with a zoom.} \label{ND_ZOOM}
\end{figure}

Even though the ND decreases with all variants, it seems faster (in terms of number of iterations) when using LP or MAM. \Cref{ND_ZOOM} shows that the IBP algorithm tends to reach a less precise plateau. 
This is due to the core of the IBP method, which is an inexact algorithm.
MAM being an exact algorithm for solving \eqref{WB}, it naturally follows the lead of LP. The slight differences between the variants LP and MAM can be explained by the fact that the general problem is non-convex and different optimal transportation plans computed by different solvers can lead to different optimization paths that result in different reduced scenario trees.
Therefore, depending on the employed solvers and the initialization, the approximated trees could be different while still having close ND with the original tree. 

\begin{table}[]
\centering
\begin{tabular}{|c|c|c|c|c|c|}
\hline
\multicolumn{1}{|l|}{{\color[HTML]{333333} \textbf{Scenarios}}} & {\color[HTML]{333333} \textbf{Nodes}} & {\color[HTML]{333333} \textbf{LP}} & {\color[HTML]{333333} \textbf{IBP}} & {\color[HTML]{333333} \textbf{MAM}} & {\color[HTML]{333333} \textbf{MAM 4 processors}} \\ \hline
\textbf{216} &  \textbf{259}                                & 0.17                               & 0.49                                & 2.21                                & 0.56                                             \\  \hline  
\textbf{1296} &  \textbf{1555}                                   & 1.54                               & 14.83                               & 18.23                               & 6.28                                             \\ \hline
\textbf{7776}  &  \textbf{9331}                                  & 74.25                              & 161.19                              & 344.83                              & 124.44                                           \\ \hline
\textbf{15625}  &  \textbf{19531}                                  & 487.58                             & 323.76                              & 816.46                              & 341.62                                           \\ \hline
\textbf{46656} &  \textbf{55987}                                   & 4905                               & 2136                                & 2541                                & 1256                                             \\ \hline
\textbf{78125} &  \textbf{97656}                                   & 13797                              & 4334                                & 3458                                & 1635                                             \\ \hline
\end{tabular}
\caption{Total time (in seconds) per method for the studied trees.} \label{time_iteration}
\end{table}

\Cref{time_iteration} shows that for small initial trees, up to 7776 scenarios, the LP variant is very efficient: it provides the lowest ND solution in the shortest time. But from 15625 scenarios and more, IBP is faster. As the number of scenarios increases, the relative performances of MAM get better and, in our case with more than 46656 scenarios, MAM is eventually twice faster than LP while reaching the same precision as depicted in \Cref{ND_iteration}. As shown in the last graph, with even more nodes and scenarios (78125 and 97656, respectively), MAM is the fastest variant.
Note that IBP is a very robust method: not only is the precision reached more than reasonable, according to \Cref{ND_iteration},  but the total time of execution is always in the ballpark of the fastest execution time of all algorithms.
Leveraging that MAM is parallelizable, we ran results using 4 processors. We witnessed that in this configuration, the variant MAM  is the most advantageous one when the initial tree has more than 20000 scenarios by far.

\subsection{Impact of the tree structure}\label{Structure_impact_section}
We observed when using the different approaches to tackle real-life examples that the \emph{structure} of the initial large tree has an impact on the convergence speed. Real-life trees do not span homogeneously from the root to the last stage, and when heterogeneity occurs, switching from one method to another can make the global reduction algorithm significantly faster.

\begin{figure}[h!] 
    \centering
    \includegraphics[width=.7\textwidth]{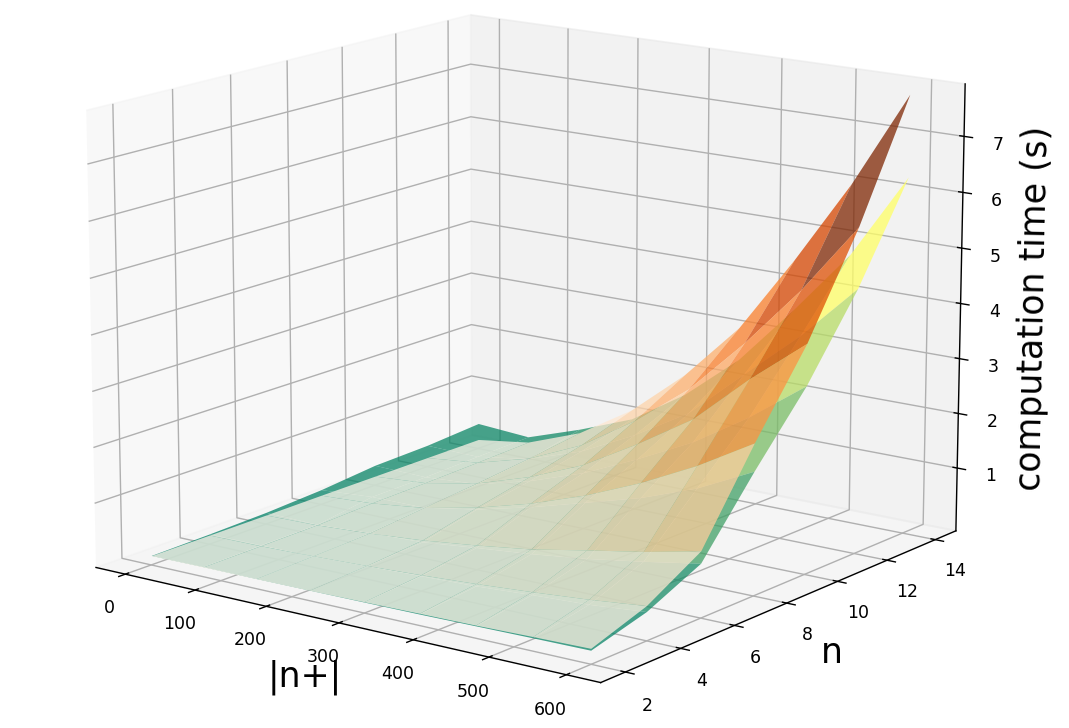}
    \caption{Influence of the tree structure on the computation time of a stage, depending on the method in use: MAM in \textcolor{green!50!black}{green} and LP in \textcolor{orange}{orange}.} \label{impact_structure}
\end{figure}


\Cref{impact_structure} illustrates the tree structure influence on the computation time of a stage, depending on the algorithm used. The original large tree is built as follows: from the root, two nodes are spanning (stage $t=2)$, then stage $t=3$ counts $n$ nodes. From this configuration, we made the number $|n+|$ of children from these $n$ nodes vary from 1 to 600 children (stage $t=T=4$). To put it differently, we built a tree with $n$ subtrees at \emph{stage $3$} having dimension $|n+|$. We reduce this tree into a binary one and evaluate \emph{stage $3$} reduction time\footnote{The computation time is evaluated as the mean of the five iterations along the reduction - at stage $3$}. We recall that the most expensive step of the algorithm consists in solving the greatest number of \eqref{WB} problems, thus stage 3. 

\begin{figure}[h!] 
    \centering
    \includegraphics[width=1\textwidth]{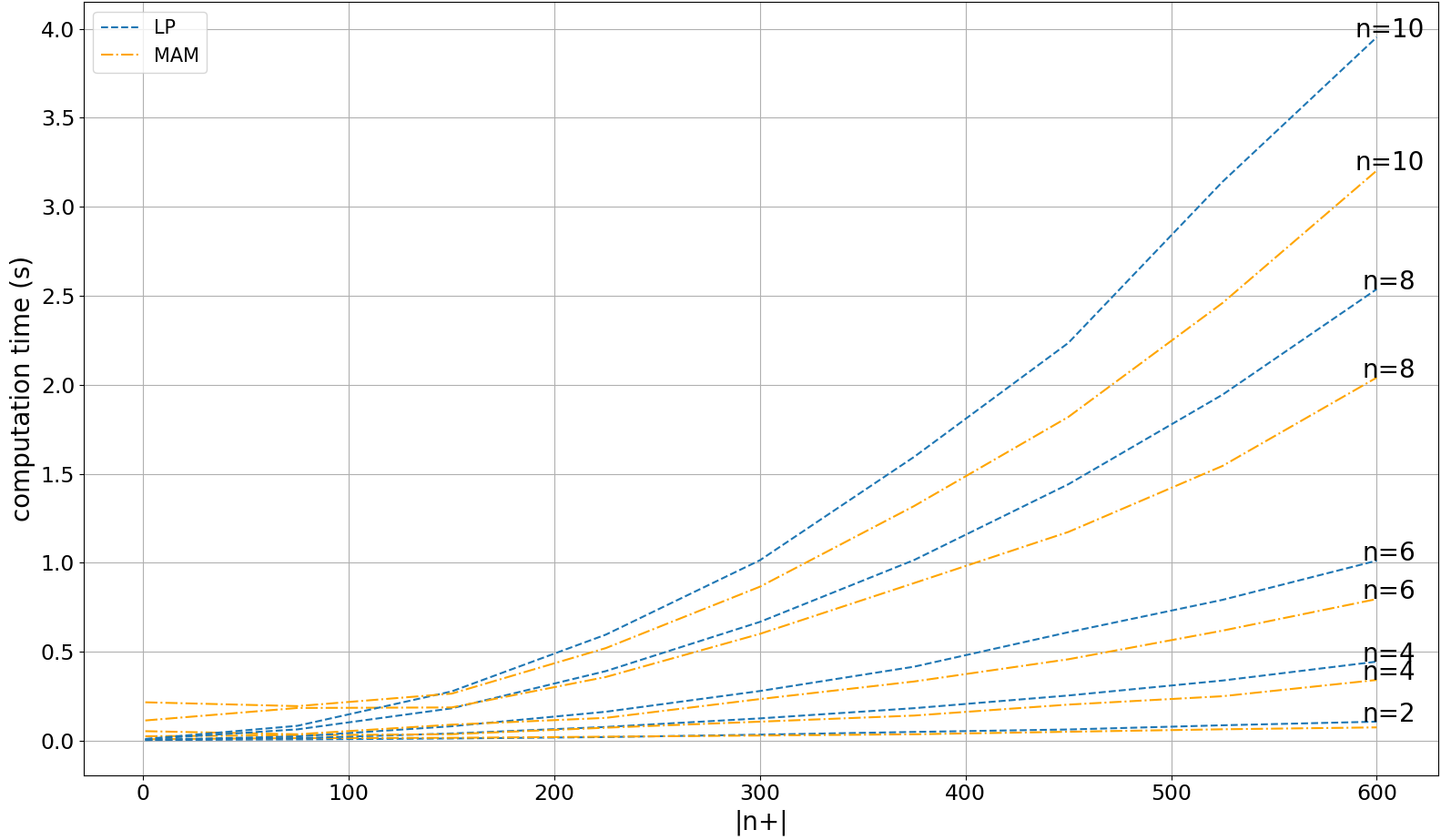}
    \caption{Influence of the tree structure on the computation time for small $n$.} \label{impact_structure_n_low}
\end{figure}

For any number $n$ of subtrees, the LP solver is always better when $|n+|$ is low, but when increasing $|n+|$  MAM solver ends up being faster because it treats better larger problems. This discrepancy gains momentum when the number of subtrees rises. It is observed that the treshold is reached even faster when $n$ is large. Therefore, at a stage with $n>10$ (and $|n+|>1$) where the reduction can take more than $4s$ we would advise to always use MAM. But, as illustrated in \Cref{impact_structure_n_low}, for smaller $n$ we would have a closer look, and use MAM only if $|n+|>150$, otherwise keep the LP method. \Cref{impact_structure} and \Cref{impact_structure_n_low} show that choosing the adequate method to tackle the reduction can speed up the computation time from $20\%$ to $35\%$. In practice, it can be observed that this repartition ends up using half MAM (mostly for the deep stages where $t\gg 0$) and half LP (mostly for the stages close to the root and the heterogeneity in the structure) to reduce a real-life initial tree.

The IBP method is challenging to use in practice within the tree reduction algorithm because of the complexity involved in tuning the hyperparameter $\lambda$, which ideally needs to be carefully chosen for each barycenter computation to achieve acceptable precision. Fine-tuning is necessary to control its accuracy. If a broadly set hyperparameter $\lambda$ is used, the algorithm may either encounter double-precision overflow errors at certain stages or compute a solution that significantly deviates from the exact optimization, without any guarantee or control over the resulting precision.
\begin{figure}[h]
    \centering
    \begin{subfigure}[t]{0.45\textwidth}
        \centering
        \includegraphics[width=\linewidth]{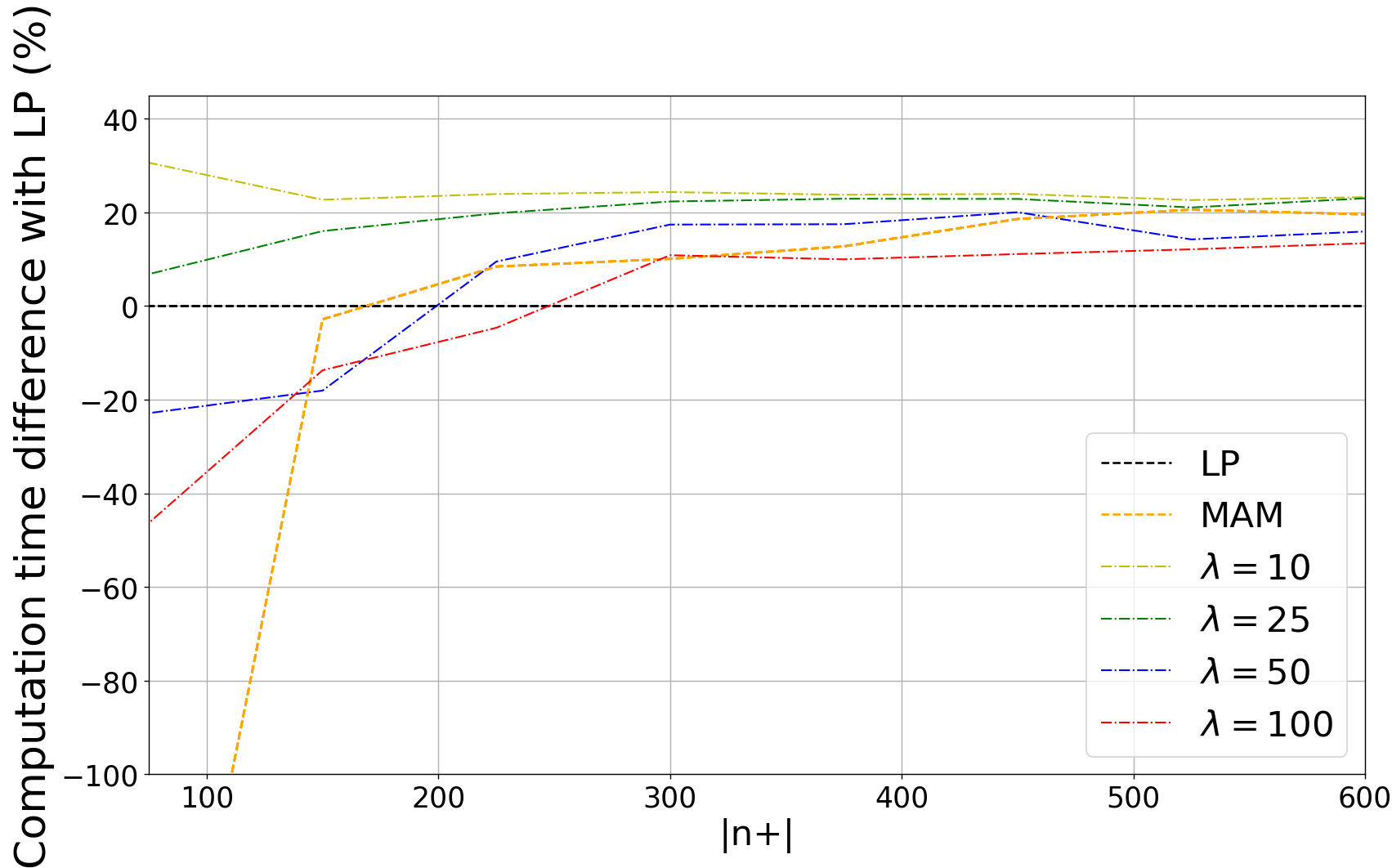}
        \caption{$n=8$}
    \end{subfigure}
    ~
    \begin{subfigure}[t]{0.45\textwidth}
        \centering
        \includegraphics[width=\linewidth]{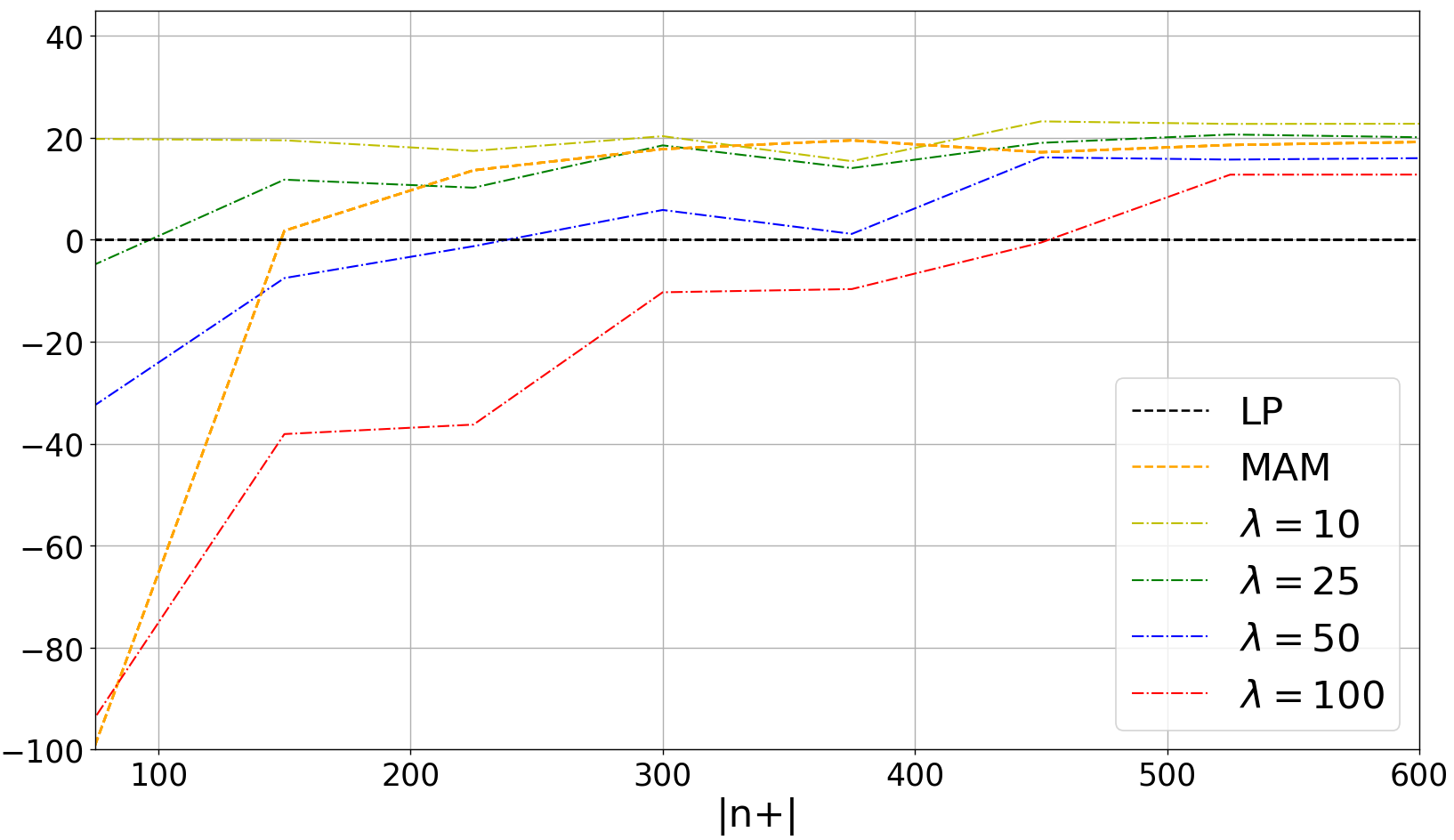}
        \caption{$n=12$}
    \end{subfigure}
    \caption{Speed comparison with IBP for different $\lambda$: A positive time difference means the method is faster than LP. Each curve is obtained by averaging the ND accuracy over $n\in\{2,4,6,8,10,12,14,16\}$. 
    }
    \label{IBP_lambda}
\end{figure}

\begin{figure}[h!] 
    \centering
    \includegraphics[width=1\textwidth]{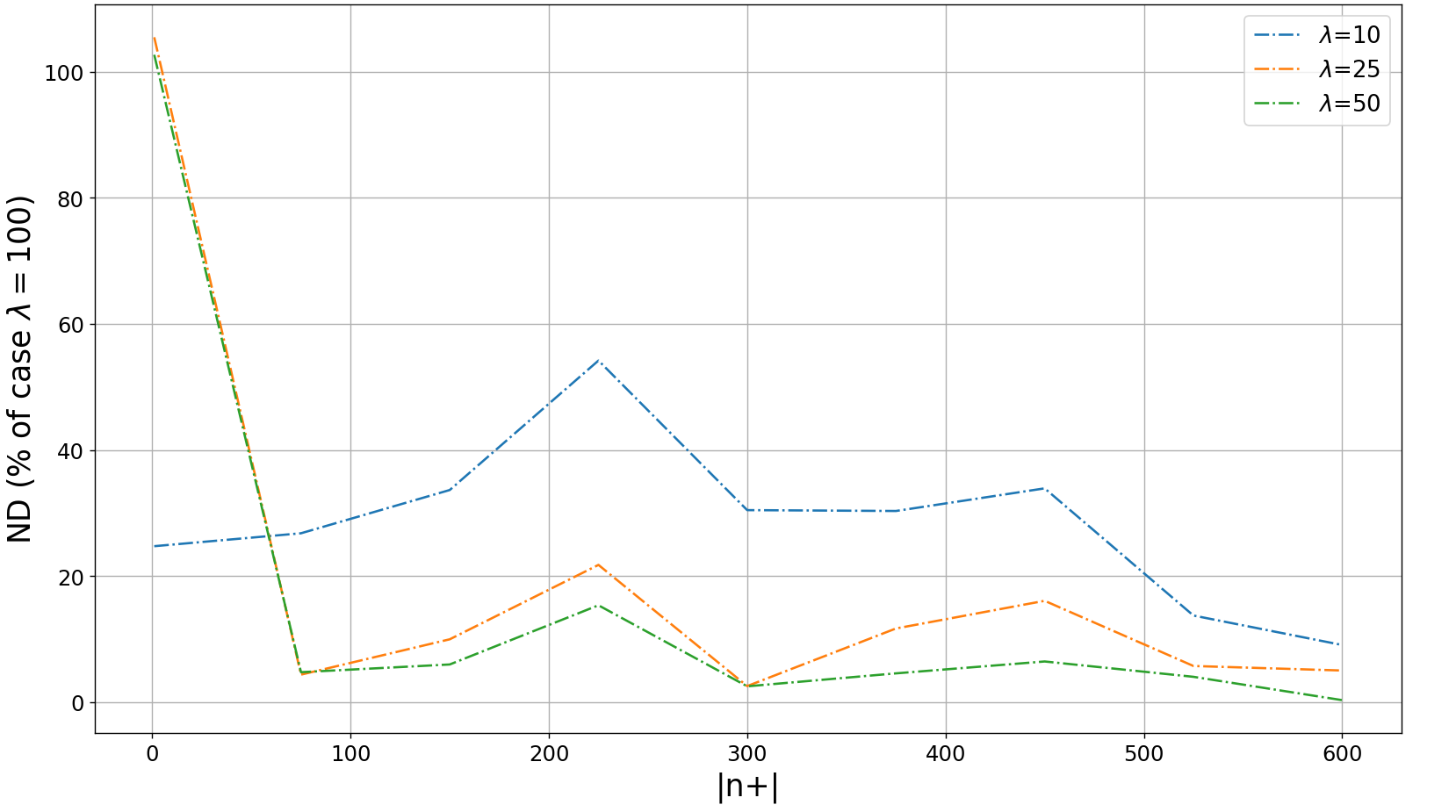}
    \caption{Average influence of $\lambda$ in the precision. Each curve is obtained by averaging the ND accuracy over $n\in\{2,4,6,8,10,12,14,16\}$.} \label{IBP_acc}
\end{figure}

\Cref{IBP_lambda} presents a study on the influence of a broad $\lambda$ used for IBP in the tree reduction algorithm for the same datasets as earlier. It shows that the use of IBP enables the reduction tree algorithm to be fast for both small and large $n$ and $n+$, and stresses that the smaller the $\lambda$ the faster the reduction. But \Cref{IBP_acc} underlines that this speed comes with a cost since the smaller the $\lambda$ and the less accurate the computation of the barycenter - the greater the ND. \Cref{IBP_acc} has been obtained by averaging the ND accuracy over $n$ \footnote{$n\in\{2,4,6,8,10,12,14,16\}$.}, where the case $\lambda=100$ is taken as reference.

\subsection{Impact of the Initialization}\label{impact_ini_sec}
In this section, we incorporate the reduction tree algorithm (utilizing either MAM or LP, depending on the results from \Cref{Structure_impact_section}) into a stochastic optimization pipeline. Data were collected from an industrial site in Solaize, France, where battery production and local consumption have been monitored over several years. Using this data, we generated scenarios for battery consumption and production throughout a single day, divided into 48 time steps, employing the methodology outlined in \cite{amabile2021optimizing}. This approach allows us to create 100 $2D$ scenarios, each consisting of 48 stages with scenario values spanning from 0 to 25kW for the consumption and 0 to 40kW for the production.
These scenarios can be modeled as a large tree. We reduce this tree using various initial filtrations to construct a smaller, approximate tree.
To built these filtrations we leverage two scenario selection algorithms:
\begin{itemize}
    \item \emph{Kmeans method}, starting from 100 scenarios it creates 25 clusters using the Euclidean norm, and then computes the 25 corresponding barycenters;
    \item \emph{Fast Forward Selection (FFS) method}, introduced by Heitsch and Römisch in \cite{heitsch_romisch}. The method iteratively selects scenarios that minimize the Wasserstein distance to the remaining scenarios.
    At each step, the scenario that best approximates the distribution is added to the reduced set until the desired number of 25 scenarios is reached, ensuring an efficient yet effective reduction.
\end{itemize}

From these reduced number of scenarios we generate the associate trees. They will initialize \Cref{alg_reduc}. We make the experience multiple times by generating several sets of scenarios.

\begin{figure}
    \centering
    \captionsetup{type=table} 
    \includegraphics[width=.8\linewidth]{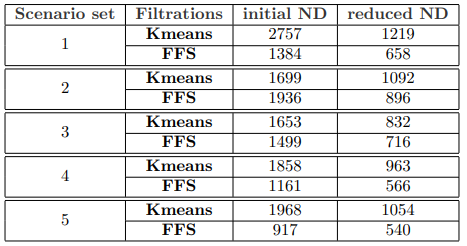}
    \caption{Comparison of the ND to the original tree before and after tree reduction using different initialization techniques.}
    \label{tab0}
\end{figure}

\Cref{tab0} compares the ND to the original large tree, both before and after tree reduction, highlighting the importance of the initial filtration.
Note that the ND is consistently  reduced after the tree reduction algorithm, as expected. 
When FFS is used to generate the initial filtration, the reduced approximated tree remains consistently closer to the original tree in terms of nested distance, even if the ND between the original tree and the FFS filtration is larger than that between the original tree and the K-means filtration at initialization. This is because FFS is a specialized algorithm designed to minimize the Wasserstein distance between the selected scenarios and the original ones, thereby preserving the maximum amount of information from the initial processes.

\newpage
\section{Compliance with Ethical Standards}
Funding: None.\\
Conflict of Interest: None.\\
Ethical approval: This article does not contain any studies with human participants or animals performed by any of the authors.


\end{document}